\newtheorem{thm}{Theorem}[section]
\newtheorem{lem}[thm]{Lemma}
\newtheorem{cor}[thm]{Corollary}
\newtheorem{prop}[thm]{Proposition}
\newtheorem{rems}[thm]{Remarks}
\newtheorem{rem}[thm]{Remark}
\DeclareMathAlphabet{\mathpzc}{OT1}{pzc}{m}{it}
\numberwithin{equation}{section}
\newcommand{\Wqb}{W_{p,N}}
\newcommand{\Wpb}{W_{p,N}}
\newcommand{\R}{\mathbb{R}}
\newcommand{\N}{\mathbb{N}}
\newcommand{\F}{\mathbb{F}}
\newcommand{\A}{\mathbb{A}}
\newcommand{\E}{\mathbb{E}}
\newcommand{\ml}{\mathcal{L}}
\newcommand{\mk}{\mathcal{K}}
\newcommand{\Isom}{\mathrm{Isom}}
\newcommand{\Om}{\Omega}
\newcommand{\ve}{\varepsilon}
\newcommand{\rd}{\mathrm{d}}
\newcommand{\bqn}{\begin{equation}}
\newcommand{\eqn}{\end{equation}}
\newcommand{\bqnn}{\begin{equation*}}
\newcommand{\eqnn}{\end{equation*}}
\newcommand{\bear}{\begin{eqnarray}} 
\newcommand{\eear}{\end{eqnarray}} 
\newcommand{\bean}{\begin{eqnarray*}} 
\newcommand{\eean}{\end{eqnarray*}} 
\newcommand{\bs}{\begin{split}}
\newcommand{\es}{\end{split}}
\newcommand{\dimens}{\mathrm{dim}}
\newcommand{\codim}{\mathrm{codim}}
\newcommand{\kk}{\mathrm{ker}}
\newcommand{\rg}{\mathrm{rg}}
\newcommand{\dhr}{\mathrel{\lhook\joinrel\relbar\kern-.8ex\joinrel\lhook\joinrel\rightarrow}}
\title[Global Continua of Positive Equilibria]
{Global Continua of Positive Equilibria for some Quasilinear Parabolic Equation with a Nonlocal Initial Condition}
\author[Ch. Walker]{Christoph Walker}
\address{Leibniz Universit\"at Hannover, Institut f\"ur Angewandte Mathematik, Welfengarten 1, D--30167 Hannover, Germany.}
\email{walker@ifam.uni-hannover.de}
\begin{document}

\begin{abstract}
This paper is concerned with a quaslinear parabolic equation including a nonlinear nonlocal initial condition. The problem arises as equilibrium equation in population dynamics with nonlinear diffusion. 
We make use of global bifurcation theory to prove existence of an unbounded continuum of positive solutions.
\end{abstract}

\keywords{Population models, age structure, quasilinear diffusion, global bifurcation, maximal regularity.
\\
{\it Mathematics Subject Classifications (2000)}: 35B32, 35K59, 47H07, 92D25.}

\maketitle

\section{Introduction}

Age-structured equations have a long history (see \cite{WebbSpringer} and the references therein), both for the case without and with spatial movement of individuals. In an abstract form, the evolution of an age-structured population with quasilinear diffusion is governed by the equations
\begin{align}
&\partial_tu \, +\partial_au \, +\, A(u,a)\,u\,+\, \mu(u,a)\,u =0\ ,& & t>0\ ,\qquad a\in J\setminus\{0\}\ ,\label{1q}\\ 
&u(t,0)\, =\, \int_0^{a_m} \beta(u,a)\,u(t,a)\,\rd a\ ,& & t>0 \label{2q}
\end{align}
subject to some initial condition at $t=0$. Here, the function $u=u(t,a)\ge 0$ represents the density of a population of individuals with respect to time $t\ge 0$ and age $a\in J$, where $J:=[0,a_m)$ with maximal age $a_m$. For $u$ and $a$ fixed, $A(u,a)$ is a linear unbounded operator $$A(u,a):E_1\subset E_0\rightarrow E_0$$ defined on some common subspace $E_1$ of an ordered Banach space $E_0$. It reflects a nonlinear spatial dispersion of the individuals. Besides spatial movement, the population undergoes death and birth processes with density dependent death modulus $\mu(u,a)\ge 0$ and birth modulus $\beta(u,a)\ge 0$. Specifically, equation \eqref{2q} gives the newborns.

To understand the asymptotic behavior of the evolution of structured populations, precise information about equilibrium (i.e. time-independent) solutions is needed. Clearly, $u\equiv 0$ is an equilibrium solution to \eqref{1q}, \eqref{2q} which therefore has to be singled out in the further analysis. Moreover, since $u$ in \eqref{1q}, \eqref{2q} represents a density, solutions should be nonnegative and thus, in our abstract setting, belong to the positive cone $E_0^+$ of the ordered Banach space $E_0$. 
 
For non-diffusive age-structured populations (i.e. $A\equiv 0$), positive equilibria were established e.g. by means of fixed point theorems in conical shells \cite{Pruess2,Webb}. These results were carried over to the diffusive case as well \cite{WalkerJDE}. However,   the fact that $u\equiv 0$ is a time-independent solution to \eqref{1q}, \eqref{2q} allows one also
 to interpret the problem of finding positive equilibrium solutions as a bifurcation problem \cite{CushingI} and so to obtain more insight on the structure of the equilibria set. For this we write the birth modulus in the form $\beta(u,a)=\lambda b(u,a)$ and thus introduce a bifurcation parameter $\lambda>0$ which determines the intensity of the individual's fertility while the qualitative structure of the fertility  is modeled by the function $b$.
Writing $\A(u)=A(u,\cdot)+\mu(u,\cdot)$ and 
\bqnn\label{agebc}
\lambda\ell(v)u:=\int_0^{a_m} \lambda b(v,a) u(a)\rd a\ ,
\eqnn
we are concerned in this paper with finding values $\lambda>0$ and positive nontrivial solutions $u:J\rightarrow E_0^+$ to the nonlinear problem
\begin{align}
&\partial_au \, +\, \A(u)\,u\, =0\ ,\qquad a\in J\setminus\{0\}\ ,\label{1}\\ 
&u(0)\, =\, \lambda\,\ell (u)u\ .\label{2}
\end{align}
The results derived herein extend our previous results \cite{WalkerSIMA,WalkerJDE} on local and global bifurcation. 
In \cite{WalkerSIMA} it was shown that if $\A(u)$ depends sufficiently smooth on $u$ and if for $u=0$, the operator $\A(0)$ possesses maximal $L_p$-regularity (for a precise definition see the next section), then a local curve of positive solutions to \eqref{1}, \eqref{2} bifurcates from the trivial branch $(\lambda,u)=(\lambda,0)$, \mbox{$\lambda\in\R$}. This local branch was subsequently extended  in \cite{WalkerJDE} to a global continuum by applying Rabinowitz' global alternative \cite{Rabinowitz}, but for less general diffusion operators. More precisely, the existence of an {unbounded} continuum of nontrivial positive solutions $(\lambda,u)$ to \eqref{1}, \eqref{2} was derived under the assumption that the operator $\A(u)$ admits a suitable decomposition $\A(u)=\A_0+\A_*(u)$ with $\A_*$ being of ``lower order''. Although the operator $\A(u)$ may still depend nonlinearly on $u$, a quasilinear dependence, however, is not covered by the bifurcation result of \cite{WalkerJDE} and hence, if $\A(u)$ is a second order differential operator, nonlinearities are merely allowed in the zero and first order terms.

The aim of this paper is to remedy this deficiency by establishing a global continuum of positive solutions for quasilinear diffusion operators $\A(u)$. After recalling (and refining) in Section~\ref{Sec2} the results of \cite{WalkerSIMA} on local bifurcation, we shall show in Section~\ref{Sec4} global bifurcation from the trivial branch provided some convexity condition (see \eqref{A2convex} for details) holds implying maximal $L_p$-regularity of $\A(u)$ for each $u$. The proof relies on a recent result of Shi $\&$ Wang~\cite{ShiWang} which is based on the unilateral global bifurcation techniques of Rabinowitz~\cite{Rabinowitz} or rather their interpretation by L\'opez-G\'omez~\cite{LopezGomez}.
 As we shall see then in  Section~\ref{Sec3}, the convexity condition \eqref{A2convex} is not necessary provided $\A$ and $\ell$ in \eqref{1}, \eqref{2}  depend real analytically on $u$. Indeed, in this case, the analytic bifurcation theory due to Buffoni $\&$ Toland~\cite{BuffoniToland} yields   a global smooth curve of positive solutions to \eqref{1}, \eqref{2}. Finally, in Section~\ref{Sec5} we give an example of a concrete operator and the corresponding function space setting. Further examples to which the results of the present paper apply can be found in \cite{WalkerSIMA,WalkerJDE}. Applications of global bifurcation theory to nonlinear population models are given e.g. in~\cite{WalkerCrelle}.

\section{Preliminaries}\label{Sec2}


\subsection{General assumptions.} 
If $E$ and $F$ are Banach spaces we write $\ml(E,F)$ for the set of linear bounded operators from $E$ to $F$, and we put $\ml(E):=\ml(E,E)$. The subset thereof consisting of compact operators is denoted by $\mk(E,F)$ and $\mk(E)$, respectively. 
$\Isom(E,F)$ stands for the set of topological isomorphisms~$E\rightarrow F$. By $E\dhr F$ we mean that $E$ is compactly embedded in $F$.\\

Throughout the paper we assume that
$E_0$ is a real Banach space ordered by a closed convex cone $E_0^+$ and $E_1$ is an embedded Banach space such that 
\bqn\label{densecompact}
\text{the embedding } E_1\hookrightarrow E_0\ \text{is dense and compact}\ .
\eqn
We fix $p\in (1,\infty)$ and set $E_\varsigma:=(E_0,E_1)_{\varsigma,p}$ with $(\cdot,\cdot)_{\varsigma,p}$ being the real interpolation functor for $\varsigma:=\varsigma(p):=1-1/p$.
For each $\theta\in (0,1)\setminus\{1-1/p\}$ we let $(\cdot,\cdot)_\theta$ denote  an admissible interpolation functor, that is, an interpolation functor $(\cdot,\cdot)_\theta$ such that the embedding $$E_1 \hookrightarrow E_\theta:=(E_0,E_1)_\theta$$ is dense. Note that the embedding $E_\theta\dhr E_\vartheta$ is compact for $0\le\vartheta<\theta\le 1$ (see \cite[I.Thm.2.11.1]{LQPP}). The interpolation spaces $E_\theta$, $0\le \theta\le 1$ are given their natural order induced by the cone $E_\theta^+:=E_\theta\cap E_0^+$. We suppose that 
\bqn\label{positiv}
\mathrm{int}(E_\varsigma^+)\not= \emptyset\ .
\eqn
Recall that $a_m\in (0,\infty]$ and set $J:=[0,a_m)$. Observe that $a_m=\infty$ is explicitly allowed. We introduce the spaces
$$
\E_0:=L_p(J,E_0)\ ,\qquad \E_1:=L_p(J,E_1)\cap W_p^1(J,E_0)
$$
and recall the embedding
\bqn\label{emb}
\E_1\hookrightarrow BUC(J,E_\varsigma)\ ,
\eqn
where $BUC$ stands for the bounded and uniformly continuous functions. Thus, the trace $$\gamma_0 u:=u(0)\ ,\quad u\in \E_1\ ,$$ yields a well-defined operator $\gamma_0\in\ml(\E_1,E_\varsigma)$. We let $\E_1^+:=L_p^+(J,E_1)\cap W_p^1(J,E_0)$ denote the positive cone of $\E_1$ and put $\dot{\E}_1^+:=\E_1^+\setminus\{0\}$.
We fix a Banach space  $\F$ such that 
\bqn\label{comp}
\E_1\dhr\F
\eqn 
and let $\Sigma$ denote an open connected zero-neighborhood in $\F$.  Then $\Sigma_1:=\Sigma\cap \E_1$ is an open connected zero-neighborhood  in $\E_1$. Suppose that for some $\vartheta\in(\varsigma,1]$ we have\footnote{Observe that this notation includes that $A=A(u,a)$ in \eqref{1q} depends in a local way on age $a$.}
\bqn\label{A1}
\A\in C^1\big(\Sigma, \ml(\E_1,\E_0)\big)\ ,\qquad \ell \in C^1\big(\Sigma, \ml(\E_1,E_\vartheta)\big)
\eqn
and that for each $u\in\Sigma_1$, the operator $\A(u)$ possesses maximal $L_p$-regularity, that is,
\bqn\label{A2}
\big(\partial_a+\A(u),\gamma_0\big)\in\Isom(\E_1,\E_0\times E_\varsigma\big)\ ,\quad u\in \Sigma_1\ .
\eqn 
Then
\bqn\label{T}
\big(u\mapsto T[u]:=\big(\partial_a+\A(u),\gamma_0\big)^{-1}\big) \in C(\Sigma_1,\ml(\E_0\times E_\varsigma,\E_1))
\eqn 
due to continuity of the inversion map~\mbox{$B\mapsto B^{-1}$}. We suppose that 
\bqn\label{positivity}
T[u](0,\cdot)\in \ml_+(E_\varsigma,\E_1)\ ,\quad u\in\Sigma_1\ ,
\eqn
that is, $T[u](0,\cdot)$ maps the positive cone $E_\varsigma^+$ into the positive cone $\E_1^+$. Set $$Q(u):=\ell(u)T[u](0,\cdot)\ ,\quad u\in \Sigma_1\ ,$$ and note that from \eqref{A1} and $E_\vartheta\dhr E_\varsigma$ we have
\bqn\label{Q}
Q\in C\big(\Sigma_1,\ml(E_\varsigma,E_\vartheta)\big)\cap C\big(\Sigma_1,\mk(E_\varsigma)\big)\ .
\eqn
We further suppose that
\bqn\label{EV}
\begin{split}
&\lambda_0^{-1}>0\ \text{is a simple eigenvalue of}\ Q(0)\ \text{with eigenvector}\ \Phi_0\in \mathrm{int}(E_\varsigma^+),\\
&\text{and there is no other eigenvalue of $Q(0)$ with an eigenvector in $E_\varsigma^+$}\ .
\end{split}
\eqn
For our global bifurcation results we shall strengthen some of the conditions. We point out that   assumptions above are met in many applications, roughly \eqref{T} is satisfied e.g. for second order operators in divergence form and \eqref{positivity} is the maximum principle for parabolic equations. We shall be more specific in Section~\ref{Sec5}. Regarding assumptions \eqref{comp} and \eqref{EV} we note:

\begin{rems}\label{R1}
(a) Let $a_m<\infty$. If $\alpha\in [0,1)$ and $s\in [0,1-\alpha)$, then $\E_1\dhr W_p^s(J,E_\alpha)$.

\begin{proof}
 This follows from a generalized Aubin-Dubinskii lemma, see \cite[Thm.1.1]{AmannGlasnik00}.
\end{proof}
 
(b) If $Q(0)\in \mk(E_\varsigma)$ is strongly positive, i.e. if $Q(0)\Phi\in \mathrm{int}(E_\varsigma^+)$ for each $\Phi\in  E_\varsigma^+\setminus\{0\}$, then \eqref{EV} holds with $\lambda_0^{-1}$ equals the spectral radius of $Q(0)$.

\begin{proof}
 This is a consequence of the Krein-Rutman theorem \cite[Thm.12.4]{DanersKochMedina}.
\end{proof}

\end{rems}

Let us note that $(\lambda,u)\in\R\times\Sigma_1$ solves \eqref{1}, \eqref{2} if and only if
\bqn\label{sol1}
u=T[u]\big(0,u(0)\big)\ ,\qquad u(0)=\lambda Q(u)u(0)\ ,
\eqn
which follows by plugging the solution of \eqref{1}, given by the first identity of \eqref{sol1}, into \eqref{2}. Equivalently, setting $S:=T[0]$ we see that $(\lambda,u)\in\R\times\Sigma_1$ solves \eqref{1}, \eqref{2} if and only if 
$$u=S\big((\A(0)-\A(u))u \,,\, \lambda\ell(u) u\big)\ .$$
We shall use both characterizations of solutions in the sequel. Note that \eqref{sol1} implies that $u(0)$ (if nonzero) is an eigenvector of $Q(u)$ with eigenvalue $\lambda^{-1}$.

\subsection{Local Bifurcation}

By what we have just observed, solving \eqref{1}, \eqref{2} is equivalent to finding the zeros $(\lambda,u)$ of the function $F:\R\times \Sigma_1\rightarrow \E_1$ defined as
$$
F(\lambda,u):=u-S\big((\A(0)-\A(u))u \,,\, \lambda\ell(u) u\big)\ ,\quad (\lambda,u)\in \R\times\Sigma_1\ .
$$
Let $$\mathfrak{S}:=\{(\lambda,u)\in \R\times \Sigma_1\,;\, F(\lambda,u)=0\}\ .$$ 
Clearly, $(\lambda,u)=(\lambda,0)$ for $\lambda\in\R$ gives a trivial branch in $\mathfrak{S}$ of solutions to \eqref{1}, \eqref{2}. We shall next show that  a nontrivial branch of positive solutions bifurcates from this branch at the point $(\lambda,u)=(\lambda_0,0)$. For this we first show that the Frech\'et derivative $F_u(\lambda,u)$ with respect to $u$, given by
\bqn\label{frechet}
F_u(\lambda,u)[\phi]= \phi-S\big((\A(0)-\A(u))\phi \,,\, \lambda\ell(u)\phi\big)-S\big(-\A_u(u)[\phi] u \,,\, \lambda\ell_u(u)[\phi] u\big)\ ,\quad \phi\in\E_1\ ,
\eqn
is an index zero Fredholm operator. This will follow from the following observation: 

\begin{prop}\label{fredholm}
Let $(\lambda,u)\in \R\times\Sigma_1$ be fixed and set 
$$
\mathcal{F}\phi:=\mathcal{F}({\lambda,u})\phi:= \phi-S\big((\A(0)-\A(u))\phi \,,\, \lambda\ell(u)\phi\big)\ ,\quad \phi\in\E_1\ .
$$
Then $\mathcal{F}\in\ml(\E_1)$ is a Fredholm operator of index zero.  More precisely,
$$
\mathrm{ker}\big(\mathcal{F}\big)=\big\{T[u](0,w)\,;\, w\in \mathrm{ker}\big(1-\lambda Q(u)\big)\big\}
$$
and
\bqn\label{rg}
\mathrm{rg}\big(\mathcal{F}\big)=\big\{h\in\E_1\,;\, h(0)+\lambda\ell (u) T[u]\big(\partial_ah+\A(0)h, 0\big)\in \mathrm{rg}\big(1-\lambda Q(u)\big)\big\}
\eqn
with
$$
\mathrm{dim}\big(\mathrm{ker}(\mathcal{F})\big)=\mathrm{codim}\big(\mathrm{rg}(\mathcal{F})\big)=\mathrm{dim}\big(\mathrm{ker}(1-\lambda Q(u))\big)<\infty\ .
$$
\end{prop}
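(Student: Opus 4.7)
The plan is to translate the operator equation $\mathcal{F}\phi=h$ into an equivalent boundary value problem involving $\A(u)$, then reduce the whole analysis to the compact operator $\lambda Q(u)$ acting on the trace space $E_\varsigma$, where the Fredholm structure is transparent.

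First I would unpack the definition of $\mathcal{F}$ by using that $S=(\partial_a+\A(0),\gamma_0)^{-1}$. If $v:=S\bigl((\A(0)-\A(u))\phi,\lambda\ell(u)\phi\bigr)$, then $v\in\E_1$ satisfies $(\partial_a+\A(0))v=(\A(0)-\A(u))\phi$ and $v(0)=\lambda\ell(u)\phi$. Writing $\mathcal{F}\phi=\phi-v=h$ and moving $\A(0)\phi$ to the right-hand side yields the equivalent pair
\begin{equation*}
(\partial_a+\A(u))\phi=\partial_a h+\A(0)h \quad \text{in } \E_0,\qquad \phi(0)=h(0)+\lambda\ell(u)\phi \quad \text{in } E_\varsigma.
\end{equation*}
Since by \eqref{A2} the operator $(\partial_a+\A(u),\gamma_0)$ is invertible with inverse $T[u]$, this is equivalent to
\begin{equation*}
\phi = T[u]\bigl(\partial_a h+\A(0)h,0\bigr)+T[u]\bigl(0,\phi(0)\bigr),\qquad \phi(0)=h(0)+\lambda\ell(u)\phi.
\end{equation*}

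Next I would apply $\lambda\ell(u)$ to the first identity and substitute into the second. Using $Q(u)=\ell(u)T[u](0,\cdot)$, this gives the scalar (in $E_\varsigma$) fixed-point equation
\begin{equation*}
(1-\lambda Q(u))\phi(0)=h(0)+\lambda\ell(u)T[u]\bigl(\partial_a h+\A(0)h,0\bigr).
\end{equation*}
This equation is the heart of the argument: once $\phi(0)\in E_\varsigma$ is a solution of it, the first displayed identity defines a corresponding $\phi\in\E_1$ with $\mathcal{F}\phi=h$; conversely, every $\phi$ with $\mathcal{F}\phi=h$ produces a solution of this trace equation. For $h=0$ this yields the stated description of $\ker(\mathcal{F})$ (noting that the map $w\mapsto T[u](0,w)$ is injective because $T[u](0,w)$ has trace $w$ at $a=0$), and for general $h\in\E_1$ it yields the range characterisation \eqref{rg}.

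For the dimension count I would invoke \eqref{Q}, which asserts $Q(u)\in\mk(E_\varsigma)$. Hence $1-\lambda Q(u)$ is a compact perturbation of the identity on $E_\varsigma$, therefore a Fredholm operator of index zero, with finite-dimensional kernel whose dimension equals the codimension of its range. The bijection $w\mapsto T[u](0,w)$ between $\ker(1-\lambda Q(u))$ and $\ker(\mathcal{F})$ transfers this finiteness to $\mathcal{F}$, and likewise the map $h\mapsto h(0)+\lambda\ell(u)T[u](\partial_a h+\A(0)h,0)$ induces a linear surjection from $\E_1/\mathrm{rg}(\mathcal{F})$ onto $E_\varsigma/\mathrm{rg}(1-\lambda Q(u))$ (and it is injective on the quotient by the range characterisation), so the codimensions match. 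Combining the two equalities gives $\mathrm{dim}\,\ker(\mathcal{F})=\mathrm{codim}\,\mathrm{rg}(\mathcal{F})<\infty$, i.e.\ $\mathcal{F}$ is Fredholm of index zero.

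The main obstacle I anticipate is not conceptual but bookkeeping: one has to verify carefully that the linear map from $\E_1/\mathrm{rg}(\mathcal{F})$ to $E_\varsigma/\mathrm{rg}(1-\lambda Q(u))$ is a well-defined isomorphism (in particular surjective), which requires exhibiting, for any given $w\in E_\varsigma$, some $h\in\E_1$ whose associated expression in \eqref{rg} equals $w$ modulo $\mathrm{rg}(1-\lambda Q(u))$. This is easy by choosing $h:=T[0](0,w)$, since then $\partial_a h+\A(0)h=0$ and $h(0)=w$, which also confirms that the codimension count is sharp.
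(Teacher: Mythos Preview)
Your argument is correct and follows essentially the same route as the paper: both reduce the equation $\mathcal{F}\phi=h$ to the trace equation $(1-\lambda Q(u))\phi(0)=h(0)+\lambda\ell(u)T[u](\partial_ah+\A(0)h,0)$ via the isomorphism $T[u]$, and then read off the kernel and range descriptions from the Fredholm theory of the compact perturbation $1-\lambda Q(u)$ on $E_\varsigma$.

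The only notable difference is in the codimension step. The paper constructs an explicit projection $\mathbb{P}\in\ml(\E_1)$ onto $\rg(\mathcal{F})$ (using a projection $P_M$ onto $\rg(1-\lambda Q(u))$ in $E_\varsigma$) and identifies its kernel as $\{h\in\E_1:\partial_ah+\A(0)h=0,\ h(0)\in N\}$, which visibly has dimension $\dim N=\codim\rg(1-\lambda Q(u))$; this concrete complement is reused later in Remark~\ref{decomposition}. Your quotient argument via the map $R(h)=h(0)+\lambda\ell(u)T[u](\partial_ah+\A(0)h,0)$ is a clean abstract substitute and yields the same numerical conclusion. One small point you leave implicit: closedness of $\rg(\mathcal{F})$; this follows automatically once you know the algebraic codimension is finite (a bounded operator with finite-codimensional range has closed range), but it is worth saying explicitly, as the paper does.
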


\begin{proof}
The idea of the proof is the same as in \cite[Lem.2.1]{WalkerSIMA} and it is rather the functional analytic setting that has to be  modified slightly. For the reader's ease we include the proof here:
By definition of $S=T[0]$, the equation $\mathcal{F}\phi=h$ for $\phi, h\in\E_1$ is equivalent to
\begin{align}
\partial_a\phi+\A(u)\phi&=\partial_a h+\A(0)h\ ,\label{L1}\\
\phi(0)-\lambda\ell(u)\phi&=h(0)\ .\label{L2}
\end{align}
From \eqref{L1} it follows  
\bqn\label{u1}
\phi=T[u]\big(\partial_ah+\A(0)h, 0\big)+T[u]\big( 0, \phi(0)\big)
\eqn
 and when plugged into \eqref{L2} we obtain 
\bqn\label{u2}
(1-\lambda Q(u))\phi(0)= h(0)+\lambda\ell (u) T[u]\big(\partial_ah+\A(0)h, 0\big)\ .
\eqn
The statement of the proposition is trivial if $\lambda=0$, so let $\lambda\not= 0$.
If $1/\lambda$ belongs to the resolvent set of $Q(u)\in\mathcal{K}(E_\varsigma)$, then \eqref{u1}, \eqref{u2} entail a trivial kernel $\kk(\mathcal{F})=\{0\}$. Moreover, in this case, for an arbitrary $h\in \E_1$, there is a unique $\phi(0)\in E_\varsigma$ solving \eqref{u2} as its right hand side  belongs to $E_\varsigma$. Consequently the corresponding $\phi\in\E_1$ given by \eqref{u1} is the unique solution to $\mathcal{F}\phi=h$. This  gives the assertion in this case.

Otherwise, if $1/\lambda$ is an eigenvalue of $Q(u)\in\mathcal{K}(E_\varsigma)$, then \eqref{u1}, \eqref{u2} yield the characterization of $\kk(\mathcal{F})$ and $\rg(\mathcal{F})$ as claimed. In particular, 
since $T[u]$ is an isomorphism, we deduce $\mathrm{dim}(\kk(\mathcal{F}))=\mathrm{dim}(\kk(1-\lambda Q(u)))$ which is a finite number because $1/\lambda$ is an eigenvalue of the compact operator $Q(u)$. Moreover, $\rg(\mathcal{F})$ is closed in $\E_1$ since $M:=\rg(1-\lambda Q(u))$ is closed by the compactness of $\lambda Q(u)$ and due to \eqref{emb}, \eqref{A1}, and \eqref{T}.
Next, to compute $\codim(\rg(\mathcal{F}))$ note that $$\codim(M)=\dimens(\kk(1-\lambda Q(u)))<\infty\ ,$$ hence $M$ is complemented in $E_\varsigma$ which yields a direct sum decomposition $E_\varsigma=M\oplus N$. Denoting by \mbox{$P_M\in\ml(E_\varsigma)$} a projection onto $M$ along $N$, we set
\bqn\label{18B}
\mathbb{P}h:=S\big(\partial_a h+\A(0)h\,,\, P_Mh (0)-(1-P_M)\lambda\ell (u) T[u]\big(\partial_ah+\A(0)h, 0\big)\big)\ ,\quad h\in\E_1\ ,
\eqn
 and obtain $\mathbb{P}\in\mathcal{L}(\E_1)$ from \eqref{emb}, \eqref{A1}, and \eqref{T}. Since
$$
\big(\partial_a +\A(0)\big)(\mathbb{P}h)=\partial_a h+\A(0)h
\ ,\quad \gamma_0(\mathbb{P}h)=  P_Mh (0)-(1-P_M)\lambda\ell (u) T[u]\big(\partial_ah+\A(0)h, 0\big)\ ,
$$
the characterization \eqref{rg} actually implies that $\mathbb{P}$ maps $\E_1$ into $\rg(\mathcal{F})$. 
Furthermore, if $h\in \rg(\mathcal{F})$, then \eqref{rg} also ensures  $$\mathbb{P}h=S(\partial_ah +\A(0)h,h(0))=h\ ,$$ so  $\mathbb{P}(\rg(\mathcal{F}))=\rg(\mathcal{F})$. Thus $\mathbb{P}^2=\mathbb{P}$ with $\rg(\mathbb{P})=\rg(\mathcal{F})$ is a projection and 
\bqn\label{P1}
\E_1= \kk (\mathbb{P})\oplus \rg(\mathcal{F})\ .
\eqn
Since $S$ is an isomorphism, we obtain 
\bqn\label{P2}
\kk (\mathbb{P})=\{h\in\E_1\,;\, \partial_ah +\A(0)h=0\,,\, h(0)\in N\}\ ,
\eqn
from which we deduce $\mathrm{dim}(\kk (\mathbb{P}))=\mathrm{dim}(N)$ and the statement follows.
\end{proof}

For future purposes let us explicitly state the following decomposition of $\E_1$.

\begin{rem}\label{decomposition}
The direct sum decomposition $$\E_1=\mathrm{span}\big(S(0,\Phi_0)\big)\oplus \rg\big(F_u(\lambda_0,0)\big)$$ holds.
\end{rem}

\begin{proof}
Taking  $(\lambda,u)=(\lambda_0,0)$ in Proposition~\ref{fredholm} and noticing that $\mathcal{F}(\lambda_0,0)=F_u(\lambda_0,0)$, the assertion follows from \eqref{P1} and \eqref{P2} with $N=\kk(1-\lambda_0Q(0))=\R\cdot \Phi_0\ .$
\end{proof}

\begin{cor}\label{fred}
For each~$(\lambda,u)\in \R\times\Sigma_1$, the Frech\'et derivative $F_u(\lambda,u)\in  \ml(\E_1)$ is a Fredholm operator of index zero.
\end{cor}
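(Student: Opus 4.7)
The plan is to write $F_u(\lambda,u) = \mathcal{F}(\lambda,u) - K(\lambda,u)$ where $\mathcal{F}(\lambda,u)$ is the Fredholm operator from Proposition~\ref{fredholm} and
$$K(\lambda,u)\phi := S\big(-\A_u(u)[\phi]\, u \,,\, \lambda\ell_u(u)[\phi]\, u\big)$$
collects the remaining terms in the formula \eqref{frechet}. Since Proposition~\ref{fredholm} already shows $\mathcal{F}(\lambda,u)$ is Fredholm of index zero, and since the Fredholm index is stable under compact perturbations, it suffices to verify that $K(\lambda,u)\in\mk(\E_1)$.

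To establish the compactness of $K(\lambda,u)$, I will use assumption \eqref{comp}, namely the compact embedding $\E_1 \dhr \F$. By \eqref{A1} we have $\A_u(u)\in\ml(\F,\ml(\E_1,\E_0))$ and $\ell_u(u)\in\ml(\F,\ml(\E_1,E_\vartheta))$, so the linear map
$$\phi \longmapsto \bigl(\A_u(u)[\phi]\,u\,,\,\lambda\ell_u(u)[\phi]\,u\bigr)$$
factors through the compact embedding $\E_1\dhr\F$ and is therefore compact from $\E_1$ into $\E_0\times E_\vartheta$. Combining with the continuous embedding $E_\vartheta\hookrightarrow E_\varsigma$ (recall $\vartheta>\varsigma$) and with $S=T[0]\in\ml(\E_0\times E_\varsigma,\E_1)$, we conclude that $K(\lambda,u)\in\mk(\E_1)$.

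Consequently $F_u(\lambda,u) = \mathcal{F}(\lambda,u) - K(\lambda,u)$ is a compact perturbation of a Fredholm operator of index zero, hence itself Fredholm of index zero. The main (and only nonroutine) point is the appeal to the compact embedding \eqref{comp}, which is precisely the reason hypothesis \eqref{comp} was imposed on the auxiliary space $\F$.
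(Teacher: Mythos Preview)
Your proof is correct and follows essentially the same approach as the paper: you decompose $F_u(\lambda,u)=\mathcal{F}(\lambda,u)-K(\lambda,u)$, invoke Proposition~\ref{fredholm} for the first summand, and show $K(\lambda,u)\in\mk(\E_1)$ by factoring through the compact embedding $\E_1\dhr\F$ from \eqref{comp}. The paper phrases the compactness of $K$ slightly differently---observing that $K(u)$ is the Fr\'echet derivative of $w\mapsto S(-\A(w)u,\lambda\ell(w)u)$ and hence lies in $\ml(\F,\E_1)\subset\mk(\E_1)$---but this is the same argument.
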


\begin{proof}
Set 
$$
K(u)\phi:=S\big(-\A_u(u)[\phi] u \,,\, \lambda\ell_u(u)[\phi] u\big)\ ,\quad \phi\in\E_1\ .
$$
Then, by \eqref{comp} and \eqref{A1}, $K(u)$ coincides with the  Frech\'et derivative 
$$
K(u)=D_w S\big(-\A(w)u \,,\, \lambda\ell(w) u\big)\big\vert_{w=u}\in\ml(\F,\E_1)\subset \mk(\E_1)\ .
$$
Consequently, Proposition~\ref{fredholm} and \eqref{frechet} show that $F_u(\lambda,u)$ is a compact perturbation of a Fredholm operator of index zero, so $F_u(\lambda,u)$ itself is a  Fredholm operator of index zero.
\end{proof}

Next, we verify that we may apply the Crandall-Rabinowitz theorem on local bifurcation  for the map $F$.

\begin{cor}\label{CR}
The kernel of  $F_u(\lambda_0,0)$ is one-dimensional, i.e. $\kk(F_u(\lambda_0,0))=\R\cdot S(0,\Phi_0)$, and the transversality condition
$F_{\lambda,u}(\lambda_0,0)\big[1,S(0,\Phi_0)\big]\not\in\rg(F_u(\lambda_0,0))$ is satisfied.
\end{cor}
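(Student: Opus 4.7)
The plan is to read both assertions off Proposition~\ref{fredholm} and Remark~\ref{decomposition} after simplifying the relevant derivatives at $(\lambda_0,0)$. First, I would specialize the Fr\'echet derivative formula \eqref{frechet} at $u=0$. The summand $S(-\A_u(0)[\phi]\cdot 0,\lambda_0\ell_u(0)[\phi]\cdot 0)$ vanishes due to the factor $u=0$, and $\A(0)-\A(0)=0$, so \eqref{frechet} collapses to
$$F_u(\lambda_0,0)[\phi]\,=\,\phi-S\big(0,\lambda_0\ell(0)\phi\big)\,=\,\mathcal{F}(\lambda_0,0)[\phi],\qquad \phi\in\E_1,$$
in the notation of Proposition~\ref{fredholm}. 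Applying that proposition with $u=0$ yields
$$\kk\big(F_u(\lambda_0,0)\big)\,=\,\big\{S(0,w)\,;\,w\in\kk(1-\lambda_0Q(0))\big\},$$
and by hypothesis \eqref{EV} the kernel $\kk(1-\lambda_0Q(0))$ is spanned by $\Phi_0$. The first assertion follows.

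For the transversality condition, I would first differentiate $F$ in $\lambda$ to obtain $F_\lambda(\lambda,u)=-S(0,\ell(u)u)$. Differentiating once more in $u$ and evaluating at $(\lambda_0,0)$, the contribution from $\ell_u(0)[\phi]$ is killed by the remaining factor $u=0$, leaving
$$F_{\lambda,u}(\lambda_0,0)[1,\phi]\,=\,-S\big(0,\ell(0)\phi\big),\qquad \phi\in\E_1.$$
Setting $\phi=S(0,\Phi_0)$, the definition $Q(u)=\ell(u)T[u](0,\cdot)$ and $S=T[0]$ give $\ell(0)S(0,\Phi_0)=Q(0)\Phi_0=\lambda_0^{-1}\Phi_0$ by \eqref{EV}. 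Hence
$$F_{\lambda,u}(\lambda_0,0)\big[1,S(0,\Phi_0)\big]\,=\,-\lambda_0^{-1}\,S(0,\Phi_0),$$
which is a nonzero scalar multiple of $S(0,\Phi_0)$.

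Finally, Remark~\ref{decomposition} furnishes the direct sum decomposition $\E_1=\spann(S(0,\Phi_0))\oplus\rg(F_u(\lambda_0,0))$, so any nonzero multiple of $S(0,\Phi_0)$ automatically lies outside $\rg(F_u(\lambda_0,0))$, which is the desired transversality. I do not anticipate any serious obstacle: the whole argument is essentially bookkeeping, the only point needing care being the clean reduction of \eqref{frechet} and of the mixed derivative at $u=0$, which rests on the trivial observation that every term in those formulas carrying an undifferentiated factor of $u$ disappears.
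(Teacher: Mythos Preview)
Your proof is correct and follows essentially the same route as the paper. The kernel computation is identical. For transversality you compute $F_{\lambda,u}(\lambda_0,0)[1,S(0,\Phi_0)]=-\lambda_0^{-1}S(0,\Phi_0)$ and then invoke the direct sum of Remark~\ref{decomposition}, whereas the paper instead feeds $-S(0,\lambda_0^{-1}\Phi_0)$ into the range description \eqref{rg} of Proposition~\ref{fredholm} to obtain $\lambda_0^{-1}\Phi_0\in\rg(1-\lambda_0Q(0))$, contradicting $\rg(1-\lambda_0Q(0))\cap\kk(1-\lambda_0Q(0))=\{0\}$ from the simplicity in \eqref{EV}; since Remark~\ref{decomposition} is itself derived from Proposition~\ref{fredholm} together with \eqref{EV}, the two arguments are equivalent repackagings of the same fact.
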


\begin{proof}
It readily follows from \eqref{EV}, \eqref{frechet}, and Proposition~\ref{fredholm} that $\kk(F_u(\lambda_0,0))=\R\cdot S(0,\Phi_0)$. Moreover,  \eqref{EV} and \eqref{frechet} imply $F_{\lambda,u}(\lambda_0,0)\big[1,S(0,\Phi_0)\big]=-(0,\lambda_0^{-1}\Phi_0)$. Suppose that $$-(0,\lambda_0^{-1}\Phi_0)\in\rg(F_u(\lambda_0,0))\ .$$ Then $\lambda_0^{-1}\Phi_0\in\rg(1-\lambda_0Q(0))$ due to Proposition~\ref{frechet} contradicting the fact that $$\rg(1-\lambda_0Q(0))\cap \kk(1-\lambda_0Q(0))=\{0\}$$ since $\lambda_0^{-1}$ is a simple eigenvalue of $Q(0)$ according to \eqref{EV}.
\end{proof}

Based on the foregoing observations we are in a position to apply the celebrated  Crandall-Rabinowitz theorem \cite{CrandallRabinowitz} on local bifurcation and obtain a branch in $\mathfrak{S}$ of positive solutions to \eqref{1}, \eqref{2}. The following result has  been observed in~\cite{WalkerSIMA}.

\begin{thm}\label{locbif}
Suppose \eqref{densecompact}, \eqref{positiv}, \eqref{comp}, \eqref{A2}, \eqref{positivity}, and \eqref{EV} hold.
Then there are $\ve>0$ and a continuous function \mbox{$(\bar{\lambda},\bar{u}): (-\ve,\ve)\rightarrow \R\times \Sigma_1$} such that the curves $$\mathfrak{K}^\pm:=\{(\bar{\lambda}(t),\bar{u}(t))\,;\, 0\le \pm t<\ve\}\subset \mathfrak{S}$$  bifurcate from the trivial branch $\{(\lambda,0)\,;\,\lambda\in\R\}$ at $(\bar{\lambda}(0),\bar{u}(0))=(\lambda_0,0)$ and 
\bqn\label{ps}
\bar{u}(t)=t S(0,\Phi_0)+o(t)\quad \text{as}\ t\rightarrow 0\ .
\eqn 
Near the bifurcation point $(\lambda_0,0)$, all nontrivial zeros of $F$ lie on the curve $\mathfrak{K}^-\cup\mathfrak{K}^+$.
Moreover, $$\mathfrak{K}^+\setminus\{(\lambda_0,0)\}\subset (0,\infty)\times\dot{\E}_1^+$$
and $\mathfrak{K}^-\cap (0,\infty)\times\dot{\E}_1^+ = \emptyset$.
\end{thm}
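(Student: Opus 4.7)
The theorem is a direct application of the Crandall--Rabinowitz local bifurcation theorem \cite{CrandallRabinowitz} to the nonlinear map $F$, followed by a sign analysis at the bifurcation point. The plan is as follows.

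First I would verify that all hypotheses of the Crandall--Rabinowitz theorem are in place. The smoothness $F\in C^1(\R\times\Sigma_1,\E_1)$ together with $F(\lambda,0)=0$ for every $\lambda\in\R$ is immediate from the definition of $F$ and from \eqref{A1}. That $F_u(\lambda_0,0)\in\ml(\E_1)$ is Fredholm of index zero is Corollary~\ref{fred}, while Corollary~\ref{CR} supplies both the one-dimensional kernel $\kk(F_u(\lambda_0,0))=\R\cdot S(0,\Phi_0)$ and the transversality condition $F_{\lambda,u}(\lambda_0,0)[1,S(0,\Phi_0)]\notin\rg(F_u(\lambda_0,0))$. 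Crandall--Rabinowitz then yields $\ve>0$, a continuous parametrization $(\bar\lambda,\bar u)\colon(-\ve,\ve)\to\R\times\Sigma_1$ with $(\bar\lambda(0),\bar u(0))=(\lambda_0,0)$, the expansion \eqref{ps}, and the local uniqueness assertion that near $(\lambda_0,0)$ the only nontrivial zeros of $F$ lie on $\mathfrak{K}^-\cup\mathfrak{K}^+$.

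The remaining content is the sign analysis. Applying the bounded trace $\gamma_0\in\ml(\E_1,E_\varsigma)$ to \eqref{ps} gives
\bqnn
\bar u(t)(0)=t\,\Phi_0+o(t)\quad\text{in}\ E_\varsigma.
\eqnn
Since $\Phi_0\in\mathrm{int}(E_\varsigma^+)$ by \eqref{EV}, there exists $r>0$ such that the open ball $B_{E_\varsigma}(\Phi_0,r)$ is contained in $E_\varsigma^+$. For $t>0$ small, the remainder estimate yields $\|\bar u(t)(0)-t\Phi_0\|_{E_\varsigma}<tr$, whence $\bar u(t)(0)\in B_{E_\varsigma}(t\Phi_0,tr)\subset\mathrm{int}(E_\varsigma^+)$ because $E_\varsigma^+$ is a cone. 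Invoking \eqref{sol1} in the form $\bar u(t)=T[\bar u(t)](0,\bar u(t)(0))$ together with the positivity assumption \eqref{positivity}, we obtain $\bar u(t)\in\E_1^+$, and hence $\bar u(t)\in\dot\E_1^+$ since $\bar u(t)\neq 0$. Continuity of $\bar\lambda$ together with $\bar\lambda(0)=\lambda_0>0$ (as $\lambda_0^{-1}>0$ by \eqref{EV}) shows, after shrinking $\ve$ if necessary, that $\mathfrak{K}^+\setminus\{(\lambda_0,0)\}\subset(0,\infty)\times\dot{\E}_1^+$.

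For $t<0$ close to zero, the same estimate places $\bar u(t)(0)$ in $\mathrm{int}(-E_\varsigma^+)=-\mathrm{int}(E_\varsigma^+)$, which meets $E_\varsigma^+$ only at $0$ by the pointedness of the order cone. As $\bar u(t)(0)\neq 0$ for small $t\neq 0$, this forces $\bar u(t)(0)\notin E_\varsigma^+$, hence $\bar u(t)\notin\E_1^+$, and therefore $\mathfrak{K}^-\cap(0,\infty)\times\dot{\E}_1^+=\emptyset$. The only subtle step is transferring the expansion \eqref{ps} from $\E_1$ to the trace space $E_\varsigma$ with enough accuracy to localize $\bar u(t)(0)$ inside (or outside) the open cone $\pm\mathrm{int}(E_\varsigma^+)$, but this is handled at once by boundedness of $\gamma_0$ and openness of $\mathrm{int}(E_\varsigma^+)$.
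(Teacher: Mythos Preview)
Your proof is correct and follows essentially the same approach as the paper: invoke Crandall--Rabinowitz via Corollary~\ref{fred} and Corollary~\ref{CR}, then apply $\gamma_0$ to the expansion \eqref{ps} to place $\bar u(t)(0)$ in $\pm\mathrm{int}(E_\varsigma^+)$ for small $\pm t>0$, and finish using \eqref{sol1}, \eqref{positivity}, and continuity of $\bar\lambda$. Your treatment of the $t<0$ case is slightly more explicit than the paper's, but the argument is the same (a minor wording point: $-\mathrm{int}(E_\varsigma^+)\cap E_\varsigma^+$ is actually empty, not $\{0\}$, since $0\notin\mathrm{int}(E_\varsigma^+)$; this only strengthens your conclusion).
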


\begin{proof}
According to Corollary~\ref{fred}, Corollary~\ref{CR}, and \cite{CrandallRabinowitz} we only have to prove  the  statements on positivity of the last sentence. From \eqref{ps} and \eqref{EV} it follows that $$t^{-1}\gamma_0 \bar{u}(t)=\Phi_0+\gamma_0\frac{o(t)}{t}\in \mathrm{int}(E_\varsigma^+)\quad \text{as}\ t\rightarrow 0\ ,$$ whence $\gamma_0 \bar{u}(t)\in E_\varsigma^+$ provided $t\in (0,\ve)$ is sufficiently small.
Since $$\bar{u}(t)=T[\bar{u}(t)]\big(0,\gamma_0 \bar{u}(t)\big)\ ,$$ we conclude $\bar{u}(t)\in \dot{\E}_1^+$ from \eqref{positivity} and $\bar{\lambda}(t)>0$ for $t\in (0,\ve)$  from the assumption $\lambda_0>0$. Consequently, $\mathfrak{K}^+\setminus\{(\lambda_0,0)\}\subset (0,\infty)\times\dot{\E}_1^+$. The same argument shows that  $\mathfrak{K}^-\cap (0,\infty)\times\dot{\E}_1^+ $ is empty.
\end{proof}

Note that if $\A$ and $\ell$ in \eqref{A2} are real analytic, then so is the curve $(\bar{\lambda},\bar{u}): (-\ve,\ve)\rightarrow \R\times \Sigma_1$    since $F:\R\times \Sigma_1\rightarrow \E_1$ is real analytic in this case. 

The direction of bifurcation may be determined from the second  relation \eqref{sol1}. For instance, if \mbox{$Q(u)\in\mk(E_\varsigma)$} is strongly positive for  $u\in\Sigma_1$, then \eqref{sol1} implies $\lambda r(Q(u))=1$ for any positive solution $(\lambda,u)\in\R\times\Sigma_1$ of \eqref{1}, \eqref{2} according to the Krein-Rutman theorem. Conditions may then be imposed which ensure 
$r(Q(u))\le 1$ so that bifurcation is necessarily supercritical. For further details we refer to \cite{WalkerSIMA}.\\

 We next prove a compactness property of the solution set $\mathfrak{S}$ that we shall use in the coming subsections.

\begin{lem}\label{compact}
Any bounded and closed subset of $\mathfrak{S}$ is compact in $\R\times \E_1$.
\end{lem}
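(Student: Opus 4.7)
The plan is to take an arbitrary sequence $(\lambda_n, u_n)_{n\in\N}$ in the bounded closed subset $M \subset \mathfrak{S}$, extract a subsequence converging in $\R \times \E_1$, and then invoke closedness of $M$ to place the limit in $M$. Boundedness of $M$ first yields, via Bolzano--Weierstrass, a subsequence with $\lambda_n \to \lambda$ in $\R$. The compact embedding $\E_1 \dhr \F$ from \eqref{comp} then delivers, on a further subsequence, $u_n \to u$ in $\F$.

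The essential step is upgrading $\F$-convergence to $\E_1$-convergence, and for this I would exploit the fixed-point form \eqref{sol1}, namely $u_n = T[u_n](0, u_n(0))$ with $u_n(0) = \lambda_n\,\ell(u_n)\,u_n$. Continuity of $\ell : \Sigma \to \ml(\E_1, E_\vartheta)$ from \eqref{A1} forces the sequence $\ell(u_n)$ to be bounded in $\ml(\E_1, E_\vartheta)$, and combined with the $\E_1$-boundedness of $u_n$ this bounds the sequence of traces $u_n(0) = \lambda_n\,\ell(u_n)u_n$ in $E_\vartheta$. Since $\vartheta > \varsigma$, the compact embedding $E_\vartheta \dhr E_\varsigma$ yields, along a further subsequence, $u_n(0) \to v$ in $E_\varsigma$ for some $v \in E_\varsigma$. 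On the other hand, continuity of $T$ from \eqref{T} gives $T[u_n] \to T[u]$ in $\ml(\E_0 \times E_\varsigma, \E_1)$, and passing to the limit in $u_n = T[u_n](0, u_n(0))$ produces $u_n \to T[u](0,v)$ strongly in $\E_1$. By uniqueness of the $\F$-limit this coincides with $u$, whence $(\lambda_n, u_n) \to (\lambda, u)$ in $\R \times \E_1$, and closedness of $M$ forces $(\lambda, u) \in M$.

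The only real obstacle I foresee is a mild circularity in evaluating $T$ at $u$: since $\Sigma$ is merely open in $\F$, the $\F$-limit $u$ of the sequence $(u_n) \subset \Sigma_1$ a priori lies only in $\overline{\Sigma}$. This can be sidestepped either by running a Cauchy argument for $T[u_n](0, u_n(0))$ in $\E_1$ directly, using uniform continuity of $T$ on the relatively compact set $\{u_n\}_n$ (which stays in $\Sigma$ eventually) and thereby producing an $\E_1$-limit without ever referring to $T[u]$; or, alternatively, by observing that once an $\E_1$-limit is in hand, closedness of $M$ in $\R \times \E_1$ together with the inclusion $M \subset \R \times \Sigma_1$ automatically places that limit inside $\Sigma_1$, legitimising the earlier use of $T[u]$.
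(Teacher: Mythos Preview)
Your proof is correct and follows essentially the same route as the paper: extract a subsequence converging in $\R\times\F$ via \eqref{comp}, bound the traces $u_n(0)$ in $E_\vartheta$ (the paper does this through the equivalent identity $u_n(0)=\lambda_n Q(u_n)u_n(0)$ rather than $u_n(0)=\lambda_n\ell(u_n)u_n$), use $E_\vartheta\dhr E_\varsigma$ to get convergence of $u_n(0)$ in $E_\varsigma$, and then upgrade to $\E_1$-convergence via $u_n=T[u_n](0,u_n(0))$ and \eqref{T}. Your treatment of the domain issue for $T[u]$ is in fact more careful than the paper's, which simply writes the $\F$-limit as an element of $\Sigma$ without further comment.
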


\begin{proof}
Let $(\lambda_n,u_n)_{n\in\N}$ be any sequence in a closed subset of $\mathfrak{S}$ such that $\|(\lambda_n,u_n)\|_{\R\times \E_1}\le c_0$ for all $n\in\N$ and some $c_0\in\R$. Then 
\bqn\label{120}
u_n=T[u_n]\big(0,u_n(0)\big)\ ,\qquad u_n(0)=\lambda_n Q(u_n)u_n(0)\ .
\eqn
Due to \eqref{comp}, we may assume without loss of generality that $(\lambda_n,u_n)\rightarrow (\lambda,u)$ in $\R\times \Sigma$. According to \eqref{A1} and \eqref{Q} this implies $T[u_n]\big(0,\cdot\big)\rightarrow T[u]\big(0,\cdot\big)$ in $\ml(E_\varsigma,\E_1)$ and $Q(u_n)\rightarrow Q(u)$ in $\ml(E_\varsigma,E_\vartheta)$. Also note that \eqref{emb} entails $\|u_n(0)\|_{E_\varsigma}\le c$ for $n\in\N$. Consequently, from \eqref{120} we derive
\bqn\label{est}
\|u_n(0)\|_{E_\vartheta}\le \vert\lambda_n\vert\, \|Q(u_n)\|_{\ml(E_\varsigma,E_\vartheta)}\, \|u_n(0)\|_{E_\varsigma}\, \le \, c\ ,\qquad n\in\N\ ,
\eqn
and we thus may assume without loss of generality that $u_n(0)\rightarrow v$ in $E_\varsigma$ since $E_\vartheta\dhr E_\varsigma$. Now \eqref{120} shows that $v=\lambda Q(u)v$ and $u_n=T[u_n]\big(0,u_n(0)\big)\rightarrow T[u]\big(0,v\big)$ in $\E_1$. Clearly, $u=T[u]\big(0,v\big)$ since $u_n\rightarrow u$ in $\F$ and we conclude that $(\lambda_n,u_n)\rightarrow (\lambda,u)$ in $\R\times\E_1$. This proves the assertion.
\end{proof}

\section{Global Continua}\label{Sec4}	

We shall show that the local curve $\mathfrak{K}^+$ provided by Theorem~\ref{locbif} is contained in a global continuum of positive solutions to \eqref{1}, \eqref{2}. In this section we make use of the unilateral global bifurcation theory in the spirit of Rabinowitz's alternative \cite{Rabinowitz,LopezGomez} as  proposed in~\cite{ShiWang}. In Section~\ref{Sec3} we shall give a slightly different approach by means of analytic bifurcation theory~\cite{BuffoniToland}.

In order to apply the results of \cite{ShiWang} we have to strengthen certain conditions. More precisely, in the following we suppose in addition to the assumptions stated in Section~\ref{Sec2} that
\bqn\label{reflexive}
E_0'\ \text{and } E_1 \text{ are separable }
\eqn
and we strengthen assumption \eqref{A2} to a convexity condition
\bqn\label{A2convex}
\big(\partial_a+[(1-\alpha)\A(0)+\alpha\A(u)],\gamma_0\big)\in\Isom(\E_1,\E_0\times E_\varsigma\big)\ ,\quad u\in \Sigma_1\ ,\quad \alpha\in [0,1]\ .
\eqn 
We also assume that
\bqn\label{l0}
\ell(0)\in\ml_+(\E_1,E_\varsigma)
\eqn
and
\bqn\label{Q0}
Q(0)\in\mk(E_\varsigma)\text{ is strongly positive } .
\eqn
Recall that this last assumption  implies \eqref{EV} with $\lambda_0^{-1}$ being given by the spectral radius of $Q(0)$ according to Remark~\ref{R1}.
Note that $E_0$ is separable since $E_1$ is separable and dense in $E_0$. Also note that if $E_0$ is reflexive (and separable), then $E_0'$ is separable. \\

Assumption~\eqref{reflexive} implies the following remark which is needed in order to apply the result of~\cite{ShiWang}. 

\begin{lem}\label{restrepo}
$\E_1$ can be equipped with an equivalent norm which is differentiable at any point different from 0.
\end{lem}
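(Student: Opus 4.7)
Proof plan. The statement is an application of a classical renorming theorem of Restrepo: a separable Banach space whose dual is also separable admits an equivalent norm that is Fr\'echet differentiable at every nonzero point. My plan is therefore to verify these two separability hypotheses for $\E_1$ and then simply invoke this theorem.

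Separability of $\E_1$ is straightforward: by \eqref{reflexive}, $E_1$ is separable, and $E_0$ inherits separability from $E_0'$. Since $p\in(1,\infty)$, the Bochner spaces $L_p(J,E_0)$ and $L_p(J,E_1)$ are separable, and the map $u\mapsto(u,\partial_a u)$ realizes $\E_1$ isometrically as a closed subspace of $L_p(J,E_1)\times L_p(J,E_0)$; closedness follows because $E_1\hookrightarrow E_0$, so that $L_p(J,E_1)$-convergence of a sequence $u_n$ together with $L_p(J,E_0)$-convergence of $\partial_a u_n$ identifies the limits $(u,v)$ via $v=\partial_a u$.

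For separability of $\E_1^*$, the key ingredient is the classical fact (Dunford--Pettis/Stegall) that every separable dual space has the Radon--Nikodym property. Applied to $E_0'$, this yields the Bochner duality $L_p(J,E_0)'\cong L_{p'}(J,E_0')$, a separable space; combined with the corresponding identification on the $L_p(J,E_1)$-factor, the dual of the ambient product space is separable, and $\E_1^*$ inherits separability as a quotient of that dual by the annihilator of $\E_1$. The main obstacle I anticipate is handling this second factor: the identification $L_p(J,E_1)'\cong L_{p'}(J,E_1')$ formally requires the Radon--Nikodym property for $E_1'$, which is not directly asserted by \eqref{reflexive}; this is precisely the delicate point that assumption \eqref{reflexive} is tailored to address (in the concrete applications the space $E_1$ is typically reflexive, so that separability of $E_1$ gives separability of $E_1'$ automatically). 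Once that technical point is settled, Restrepo's theorem closes the proof with no further work.
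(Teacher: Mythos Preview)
Your invocation of Restrepo's theorem and the verification that $\E_1$ is separable are both correct and match the paper. The divergence is in how you establish separability of $\E_1'$.

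Your route through the closed embedding $\E_1\hookrightarrow L_p(J,E_1)\times L_p(J,E_0)$ forces you to control the dual of $L_p(J,E_1)$, and as you yourself flag, this requires the Radon--Nikodym property for $E_1'$. But \eqref{reflexive} does \emph{not} supply this: it asserts only separability of $E_0'$ and of $E_1$, with no reflexivity assumption on $E_1$. Your parenthetical remark that ``in the concrete applications $E_1$ is typically reflexive'' may well be true, but it is not a proof under the stated hypotheses; the gap is genuine, and without further input on $E_1'$ your argument does not close. (Indeed, take $E_1=L_1$: then $E_1$ is separable while $E_1'=L_\infty$ is not, and $L_p(J,E_1)'$ fails to be separable.)

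The paper sidesteps this completely by exploiting the \emph{other} embedding at hand: $\E_1$ is densely contained in $\E_0=L_p(J,E_0)$, so the adjoint injects $\E_0'$ densely into $\E_1'$. Now $E_0'$, being a separable dual, has the Radon--Nikodym property, whence $\E_0'=L_{p'}(J,E_0')$ and this space is separable; density then passes separability on to $\E_1'$. The point is that \eqref{reflexive} is tailored to make \emph{this} Bochner duality work, and no information about $E_1'$ is ever needed. Replace your product-space detour with this observation and the proof goes through.
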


\begin{proof}
Due to \cite{Restrepo}, as $\E_1$ is  separable since $E_1$ is, the statement that $\E_1$ can be equipped with an equivalent norm which is differentiable at any point different from 0 is equivalent to saying that the dual space $\E_1'$ is separable.
But since $E_0'$ is separable, the space $\E_0'=L_{p'}(J,E_0')$ (with $1=1/p+1/p'$) is also separable and, moreover, densely injected in $\E_1'$ since $\E_1$ is densely injected in $\E_0$. Whence $\E_1'$ is separable.
\end{proof}

The convexity condition \eqref{A2convex} yields:

\begin{lem}\label{Fredconvex}
For $(\lambda,u)\in \R\times\Sigma_1$ and $\alpha\in[0,1]$, the operator $$(1-\alpha)F_u(\lambda,0)+\alpha F_u(\lambda,u)\in\ml(\E_1)$$ is Fredholm of index zero.
\end{lem}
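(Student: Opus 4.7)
The plan is to reduce the convex combination to an operator to which the proof of Proposition~\ref{fredholm} applies almost verbatim, after stripping off a compact part. From Corollary~\ref{fred} recall the splitting $F_u(\lambda,u) = \mathcal{F}(\lambda,u) - K(u)$ with $K(u)\in \mk(\E_1)$ (and $K(0)=0$). Therefore
$$
(1-\alpha)F_u(\lambda,0)+\alpha F_u(\lambda,u) \;=\; \mathcal{G}_\alpha \,-\, \alpha K(u),
\qquad \mathcal{G}_\alpha := (1-\alpha)\mathcal{F}(\lambda,0)+\alpha\mathcal{F}(\lambda,u),
$$
so it suffices to show $\mathcal{G}_\alpha$ is Fredholm of index zero, the passage from $\mathcal{G}_\alpha$ to the claimed operator being a compact perturbation.

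Next I would rewrite $\mathcal{G}_\alpha$ in a form that exactly mirrors the operator $\mathcal{F}$ of Proposition~\ref{fredholm}. Using linearity of $S=T[0]$ in both slots, a direct rearrangement gives
$$
\mathcal{G}_\alpha \phi \;=\; \phi - S\bigl(\alpha(\A(0)-\A(u))\phi,\; \lambda\,\tilde\ell_\alpha\phi\bigr),\qquad \phi\in\E_1,
$$
where I set $\tilde\A_\alpha := (1-\alpha)\A(0)+\alpha\A(u)$ and $\tilde\ell_\alpha := (1-\alpha)\ell(0)+\alpha\ell(u)$. Reversing the manipulation \eqref{L1}--\eqref{L2} from the proof of Proposition~\ref{fredholm}, the equation $\mathcal{G}_\alpha\phi=h$ is then equivalent to the ODE--trace pair
$$
\partial_a\phi + \tilde\A_\alpha \phi \,=\, \partial_a h + \A(0) h,\qquad \phi(0)-\lambda\,\tilde\ell_\alpha\phi \,=\, h(0).
$$

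To solve this system uniformly in $\alpha$ I would invoke the strengthened convexity hypothesis \eqref{A2convex}, which asserts precisely that $(\partial_a+\tilde\A_\alpha,\gamma_0)\in\Isom(\E_1,\E_0\times E_\varsigma)$ for every $\alpha\in[0,1]$. Setting $\tilde T_\alpha := (\partial_a+\tilde\A_\alpha,\gamma_0)^{-1}$ and $\tilde Q_\alpha := \tilde\ell_\alpha\, \tilde T_\alpha(0,\cdot)$, the operator $\tilde Q_\alpha$ is compact on $E_\varsigma$ exactly as in \eqref{Q}, since $\tilde\ell_\alpha\in\ml(\E_1,E_\vartheta)$ by \eqref{A1} and $E_\vartheta\dhr E_\varsigma$. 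The kernel/range analysis of Proposition~\ref{fredholm} then applies word-for-word with $(T[u],Q(u),\ell(u))$ replaced by $(\tilde T_\alpha,\tilde Q_\alpha,\tilde\ell_\alpha)$, yielding
$$
\dimens\kk(\mathcal{G}_\alpha) \,=\, \codim\rg(\mathcal{G}_\alpha) \,=\, \dimens\kk(1-\lambda\,\tilde Q_\alpha)\,<\,\infty,
$$
so $\mathcal{G}_\alpha$ is Fredholm of index zero.

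The only genuinely new ingredient, and the place where \eqref{A2convex} is essential rather than the weaker \eqref{A2}, is the solvability of the convex-average parabolic problem uniformly in $\alpha\in[0,1]$; without it the reduction to a compact perturbation $1-\lambda\tilde Q_\alpha$ of the identity on $E_\varsigma$ is unavailable. Everything else is bookkeeping: the algebraic identity for $\mathcal{G}_\alpha$, the compact perturbation by $\alpha K(u)$, and a faithful transcription of the earlier proof.
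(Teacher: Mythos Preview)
Your proof is correct and follows essentially the same approach as the paper: both split the convex combination into the $\mathcal{G}_\alpha$ part handled exactly as in Proposition~\ref{fredholm} (with $(\tilde T_\alpha,\tilde Q_\alpha,\tilde\ell_\alpha,\tilde\A_\alpha)$ replacing $(T[u],Q(u),\ell(u),\A(u))$, which is where \eqref{A2convex} enters) plus the compact perturbation $\alpha K(u)$ handled as in Corollary~\ref{fred}. The paper's proof is terse---it merely displays the same algebraic identity and refers back to Proposition~\ref{fredholm} and Corollary~\ref{fred}---while you have spelled out the details.
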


\begin{proof}
Let $(\lambda,u)\in \R\times\Sigma_1$ and $\alpha\in[0,1]$.
Noticing that
\bqnn
\begin{split}
(1-\alpha)F_u(\lambda,0)[\phi]+\alpha F_u(\lambda,u)[\phi]=\ & \phi-S\big(\alpha(\A(0)-\A(u))\phi \,,\, \lambda[(1-\alpha)\ell(0)+\alpha\ell(u)]\phi\big)\\
& -\alpha S\big(-\A_u(u)[\phi] u \,,\, \lambda\ell_u(u)[\phi] u\big)
\end{split}
\eqnn
for $\phi\in\E_1$, the proof is the same as in Proposition~\ref{fredholm} and Corollary~\ref{fred} by taking \eqref{A2convex} into account.
\end{proof}

Now we can prove that there is  global continuum of positive solutions to \eqref{1}, \eqref{2}. 

\begin{thm}\label{T_continua}
Suppose \eqref{densecompact}, \eqref{positiv}, \eqref{comp}, \eqref{A1}, \eqref{positivity}, and \eqref{reflexive}--\eqref{Q0} hold. Then there is a connected component $\mathfrak{C}^+$ of $\bar{\mathfrak{S}}$ containing the branch $\mathfrak{K}^+$ such that $\mathfrak{C}^+\setminus\{(\lambda_0,0)\}\subset (0,\infty)\times\dot{\E}_1^+$. 
Moreover, one of the alternatives
\begin{itemize}
\item[(i)] $\mathfrak{C}^+$ is unbounded in $\R\times\E_1$, or
\item[(ii)] $\mathfrak{C}^+$ intersects with the boundary $\R\times\partial\Sigma_1$
\end{itemize}
occurs. In particular, if $\Sigma=\F$, then $\mathfrak{C}^+$ is unbounded in $\R\times\E_1$.
\end{thm}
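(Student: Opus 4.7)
The plan is to invoke the unilateral global bifurcation theorem of Shi and Wang~\cite{ShiWang}, for which essentially all structural hypotheses have already been arranged. The equation $F(\lambda,u)=0$ is of the form identity minus a compact nonlinear operator (since $S\in\ml(\E_0\times E_\varsigma,\E_1)$, the nonlinear map $(\lambda,u)\mapsto S\big((\A(0)-\A(u))u,\lambda\ell(u)u\big)$ is compact on bounded subsets of $\R\times\E_1$ by the compact embeddings~\eqref{densecompact} and~\eqref{comp} combined with Lemma~\ref{compact}). The local Crandall--Rabinowitz bifurcation data at $(\lambda_0,0)$ is supplied by Corollary~\ref{CR} and Theorem~\ref{locbif}, giving $\mathrm{ker}\,F_u(\lambda_0,0)=\R\cdot S(0,\Phi_0)$, the transversality condition, and the local positive sub-branch $\mathfrak{K}^+$. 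The Fredholm index zero of $F_u$ along the straight homotopy between $F_u(\lambda,0)$ and $F_u(\lambda,u)$ comes from Lemma~\ref{Fredconvex} via the convexity assumption~\eqref{A2convex}, and the equivalent differentiable norm required on the ambient space is provided by Lemma~\ref{restrepo} thanks to the separability assumption~\eqref{reflexive}.

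With these ingredients in place, the Shi--Wang theorem produces a connected component $\mathfrak{C}^+$ of $\bar{\mathfrak{S}}\setminus\{(\lambda,0):\lambda\neq\lambda_0\}$ containing $\mathfrak{K}^+$, satisfying the Rabinowitz-type trichotomy: $\mathfrak{C}^+$ is either (a)~unbounded in $\R\times\E_1$, (b)~meets $\R\times\partial\Sigma_1$, or (c)~returns to the trivial branch at some $(\lambda_*,0)$ with $\lambda_*\neq\lambda_0$. The global positivity claim $\mathfrak{C}^+\setminus\{(\lambda_0,0)\}\subset(0,\infty)\times\dot{\E}_1^+$ is obtained by showing that the set of positive nontrivial solutions is both open and closed inside $\mathfrak{S}\setminus\{(\lambda,0):\lambda\in\R\}$: closedness is immediate from the closedness of the cones $E_\varsigma^+$ and $\E_1^+$, while openness follows from~\eqref{positivity} together with the fact that, by Theorem~\ref{locbif}, near every positive solution $(\lambda,u)$ the only local nontrivial zeros of $F$ in a full neighborhood come from the representation formula $u=T[u](0,u(0))$, which preserves positivity under small perturbations of the initial trace.

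Alternative~(c) can now be excluded. If $(\lambda_n,u_n)\in\mathfrak{C}^+\setminus\{(\lambda_0,0)\}$ were to converge to $(\lambda_*,0)$ with $u_n\in\dot{\E}_1^+$ and $u_n(0)\neq0$, then by~\eqref{sol1} the normalized traces $v_n:=u_n(0)/\|u_n(0)\|_{E_\varsigma}\in E_\varsigma^+$ satisfy $v_n=\lambda_n Q(u_n) v_n$. The strong positivity~\eqref{Q0} and the convergence $Q(u_n)\to Q(0)$ in $\ml(E_\varsigma,E_\vartheta)$ from~\eqref{Q}, together with the compact embedding $E_\vartheta\dhr E_\varsigma$, yield a subsequential limit $v_*\in E_\varsigma^+\setminus\{0\}$ with $v_*=\lambda_* Q(0) v_*$. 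Hypothesis~\eqref{EV} then forces $\lambda_*=\lambda_0$, a contradiction.

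The main obstacle in this argument is the propagation of positivity along $\mathfrak{C}^+$, because a priori the component $\mathfrak{C}^+$ is defined only as an abstract connected set in $\R\times\E_1$; the openness-in-the-solution-set of positivity must be extracted carefully from the representation $u=T[u](0,u(0))$ combined with~\eqref{positivity}, since $E_\varsigma^+$ has nonempty interior~\eqref{positiv} but $\E_1^+$ need not. Once alternatives~(c) is ruled out, only~(i) or~(ii) of the statement remain. For the final assertion, if $\Sigma=\F$ then $\Sigma_1=\E_1$ is the whole space, so $\R\times\partial\Sigma_1=\emptyset$ and alternative~(ii) is vacuous, forcing $\mathfrak{C}^+$ to be unbounded in $\R\times\E_1$.
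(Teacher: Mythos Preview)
Your strategy---invoke the Shi--Wang unilateral theorem, establish positivity along the component, and rule out the return-to-trivial alternative---is the paper's strategy. But you have misstated the output of \cite[Thm.~4.4]{ShiWang}, and this is a genuine gap. The unilateral theorem delivers, in addition to your (a), (b), (c), a fourth alternative: $\mathfrak{C}^+$ may contain a point $(\lambda,z)$ with $z\neq 0$ and $z\in\rg\big(F_u(\lambda_0,0)\big)$, where $\E_1=\spann\big(S(0,\Phi_0)\big)\oplus\rg\big(F_u(\lambda_0,0)\big)$ is the decomposition of Remark~\ref{decomposition}. This Dancer/L\'opez-G\'omez alternative is exactly what distinguishes the \emph{unilateral} component $\mathfrak{C}^+$ from the full Rabinowitz component, and you neither state nor exclude it. In the paper, ruling it out is the most substantial step: assuming such a $(\lambda,z)$ exists with $z\in\dot{\E}_1^+$, one writes $z=F_u(\lambda_0,0)\zeta$, chooses $\kappa>0$ so that $\kappa\Phi_0+\zeta(0)-z(0)\in E_\varsigma^+$, sets $\psi:=\kappa S(0,\Phi_0)+\zeta-z$, and derives $(1-\lambda_0 Q(0))\psi(0)=\lambda_0\ell(0)z$. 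Since $\lambda_0^{-1}$ is the spectral radius of the strongly positive compact operator $Q(0)$ and $\lambda_0\ell(0)z\in E_\varsigma^+$ by \eqref{l0}, this equation admits no positive solution \cite[Cor.~12.4]{DanersKochMedina}, contradicting $\psi(0)\in E_\varsigma^+$. Without this argument your dichotomy (i)/(ii) does not follow.

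A second, smaller issue: your open/closed argument for positivity is not quite right. You appeal to Theorem~\ref{locbif} for openness, but that theorem describes the solution set only near $(\lambda_0,0)$, not near an arbitrary positive solution; and ``positivity preserved under small perturbations of the trace'' tacitly assumes $u(0)\in\mathrm{int}(E_\varsigma^+)$, which is not guaranteed since only $Q(0)$, not $Q(u)$, is assumed strongly positive. The paper instead argues by connectivity: if $\mathfrak{C}^+$ leaves $(0,\infty)\times\dot{\E}_1^+$ at some $(\lambda,u)$, the normalization argument you give for your alternative~(c) forces $u\equiv 0$ and then $\lambda=\lambda_0$. That argument is correct and already in your write-up; it simultaneously yields positivity and excludes the return-to-trivial alternative, so you should use it directly rather than the open/closed reformulation.
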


\begin{proof}
Due to Lemma~\ref{restrepo} and Lemma~\ref{Fredconvex}, we may apply \cite[Thm.4.4]{ShiWang} and deduce that $\mathfrak{K}^+$ is contained in a connected component  $\mathfrak{C}^+$ of $\bar{\mathfrak{S}}$ and one
of the alternatives
\begin{itemize}
\item[(a)] $\mathfrak{C}^+$ is not compact in $\R\times\Sigma_1$, or
\item[(b)] $\mathfrak{C}^+$ contains a point $(\lambda_*,0)$ with $\lambda_*\not=\lambda_0$, or
\item[(c)] $\mathfrak{C}^+$ contains a point $(\lambda,z)$ with $z\not=0$ and $z\in \rg\big(F_u(\lambda_0,0)\big)$ 
\end{itemize}
occurs, where we have used in (c) the decomposition $\E_1=\mathrm{span}\big(S(0,\Phi_0)\big)\oplus \rg\big(F_u(\lambda_0,0)\big)$ stated in Remark~\ref{decomposition}. Notice that, owing to Lemma~\ref{compact} (see \cite[Rem.4.2]{ShiWang}), alternative (a) is equivalent to saying that 
\begin{itemize}
\item[(i)] $\mathfrak{C}^+$ is unbounded in $\R\times\E_1$, or 
\item[(ii)] $\mathfrak{C}^+$ intersects with the boundary $\R\times\partial\Sigma_1$.
\end{itemize}
According to Theorem~\ref{locbif}, the component $\mathfrak{C}^+$ near the bifurcation point $(\lambda_0,0)$ coincides with~$\mathfrak{K}^+$. Next, we show that $\mathfrak{C}^+\setminus\{(\lambda_0,0)\}\subset (0,\infty)\times\dot{\E}_1^+$. Indeed, if $\mathfrak{C}^+$ leaves $(0,\infty)\times\dot{\E}_1^+$ at some point $(\lambda,u)\in\mathfrak{C}^+\cap \R\times\E_1$ with $(\lambda,u)\notin (0,\infty)\times\dot{\E}_1^+$, there is a sequence $((\lambda_j,u_j))_{j\in\N}$ in $\mathfrak{C}^+\cap (0,\infty)\times\dot{\E}_1^+$  such that $(\lambda_j,u_j)\rightarrow (\lambda,u)$ in $\R\times\E_1$.  Clearly, $\lambda\ge 0$ and $u\in \E_1^+$ with $\lambda=0$ or $u\equiv 0$. But since $(\lambda,u)\in\mathfrak{S}$, we readily deduce from \eqref{sol1} that $\lambda=0$ implies $u\equiv 0$. Hence $u\equiv 0$ in any case, i.e. $(\lambda_j,u_j)\rightarrow (\lambda,0)$ in $\R\times\E_1$. Again by \eqref{sol1}, we have 
\bqn\label{sol111}
u_j=T[u_j]\big(0,u_j(0)\big)\ ,\qquad u_j(0)=\lambda_j Q(u_j)u_j(0)\ .
\eqn
Since $v_j:=u_j/\|u_j\|_{\E_1}$ defines a bounded sequence in $\E_1$, by \eqref{comp} we may  extract a subsequence of $(v_j)$ (which we do not index)  which converges to some $v$ in $\F$. From \eqref{T} and \eqref{Q} we deduce $T[u_j]\rightarrow T[0]=S$ in $\ml(\E_0\times E_\varsigma,\E_1)$ and $Q(u_j)\rightarrow Q(0)$ in $\ml (E_\varsigma,E_\vartheta)$. As in \eqref{est} 
we then obtain from \eqref{Q} and \eqref{emb} that 
$$
\|u_j(0)\|_{E_\vartheta}\le c\|u_j(0)\|_{E_\varsigma} \le c \|u_j\|_{\E_1}\ ,
$$
from which we conclude that the sequence $(v_j(0))$ is bounded in $E_\vartheta\dhr E_\varsigma$. So, extracting a further subsequence (which we again do not index) we see that $v_j(0)\rightarrow w$ in $E_\varsigma^+$. Letting $j\rightarrow\infty$ in  \eqref{sol111} yields
$$
v=S\big(0,w\big)\ ,\qquad w=\lambda  Q(0)w\ ,
$$
from which we first deduce that $\lambda>0$ since otherwise $w=0$ implying the contradiction \mbox{$v\equiv 0$}. Consequently, $w\in E_\varsigma^+$ is an eigenvector of $Q(0)$ to the eigenvalue $1/\lambda$. Thus \mbox{$\lambda=\lambda_0$} and \mbox{$w=\alpha\Phi_0$} for some $\alpha>0$ according to \eqref{EV}, hence $(\lambda,u)=(\lambda_0,0)$.
Therefore, $\mathfrak{C}^+$ leaves the set $(0,\infty)\times\dot{\E}_1^+$ only at $(\lambda_0,0)$ and \mbox{$\mathfrak{C}^+\setminus\{(\lambda_0,0)\}$} is thus contained in $ (0,\infty)\times\dot{\E}_1^+$. In particular, alternative (b) above does not occur. We finally show that alternative (c) does not occur as well. Suppose to the contrary that $\mathfrak{C}^+$ contains a point $(\lambda,z)$ with $z\not=0$ and $z=F_u(\lambda_0,0)\zeta$ for some $\zeta\in\E_1$. Then $z\in\dot{\E}_1^+$ and $\zeta-z=S(0,\lambda_0\ell(0)\zeta)$. Recall from Corollary~\ref{CR} that $\phi_*:=S(0,\Phi_0)$ with $\Phi_0$ from \eqref{EV} satisfies $\phi_*=S(0,\lambda_0\ell(0)\phi_*)$. Since $\Phi_0\in\mathrm{int}(E_\varsigma^+)$, we find $\kappa>0$ such that $\kappa\Phi_0+\zeta(0)-z(0)$ belongs to $E_\varsigma^+$. Set $\psi:=\kappa \phi_*+\zeta-z$. Due to \mbox{$\psi=\lambda_0 S(0,\ell(0)(\psi+z))$} we conclude  $\partial_a\psi+\A(0)\psi=0$ on the one hand from which $\psi=  S(0,\psi(0))$, and $$\psi(0)=\lambda_0\ell(0)\psi+\lambda_0\ell(0) z$$ on the other. Combining these two observations we derive the equation \bqn\label{psot}
(1-\lambda_0 Q(0))\psi(0)=\lambda_0\ell(0) z \ .
\eqn
However, since $\lambda_0^{-1}$ is the spectral radius of the strongly positive compact operator $Q(0)$ and since $\lambda_0\ell(0) z\in E_\varsigma^+$ by \eqref{Q0}, equation \eqref{psot} has no positive solution according to \cite[Cor.12.4]{DanersKochMedina} contradicting $\psi(0)=\kappa\Phi_0+\zeta(0)-z(0)\in E_\varsigma^+$. Therefore, alternative (c) above is impossible and the theorem is proven.
\end{proof}

\section{Global Branches in the Analytic Case}\label{Sec3}

In this section we shall show that   assumptions \eqref{reflexive}-\eqref{Q0} of Theorem~\ref{T_continua} are not needed to
 extend the function $(\bar{\lambda},\bar{u})$  from $(0,\ve)$ to $(0,\infty)$ provided that $\A$ and $\ell$ are real analytic. In this case we obtain a slightly better result. So, let us strengthen assumption \eqref{A1} to
\bqn\label{A1a}
\A\in C^\omega\big(\Sigma, \ml(\E_1,\E_0)\big)\ ,\qquad \ell \in C^\omega\big(\Sigma, \ml(\E_1,E_\vartheta)\big)\ .
\eqn
As noted in Section~\ref{Sec2}, the function $(\bar{\lambda},\bar{u})$ is real analytic in this case. 

\begin{thm}\label{T-analytic}
Suppose \eqref{densecompact}, \eqref{positiv}, \eqref{comp}, \eqref{A2}, \eqref{positivity}, \eqref{EV}, and \eqref{A1a} hold. Then there is a  continuous curve $\mathfrak{R}^+=\{(\bar{\lambda}(t),\bar{u}(t))\,;\, t\in [0,\infty)\}\subset \mathfrak{S}$ extending $\mathfrak{K}^+$. The curve $\mathfrak{R}^+\setminus \{(\lambda_0,0)\}$ lies in \mbox{$(0,\infty)\times \dot{\E}_1^+$} and has at each point a local analytic and injective reparametrization. Moreover, one of the alternatives
\begin{itemize}
\item[(i)] $\|(\bar{\lambda}(t),\bar{u}(t))\|_{\R\times\E_1}\rightarrow \infty$ as $t\rightarrow$, or
\item[(ii)] $\bar{u}(t)\rightarrow \partial\Sigma_1$ as $t\rightarrow\infty$
\end{itemize}
occurs. In particular, if $\Sigma=\F$, then (i) occurs.
\end{thm}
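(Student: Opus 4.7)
The plan is to invoke the real-analytic global bifurcation theorem of Buffoni and Toland~\cite{BuffoniToland}. All its hypotheses are already in place: by \eqref{A1a} the zero-finding map $F:\R\times\Sigma_1\to\E_1$ is real analytic, by Corollary~\ref{fred} the partial derivative $F_u(\lambda,u)\in\ml(\E_1)$ is Fredholm of index zero at every $(\lambda,u)\in\R\times\Sigma_1$, by Lemma~\ref{compact} every bounded closed subset of $\mathfrak{S}$ is compact in $\R\times\E_1$, and by Corollary~\ref{CR} together with the analyticity of $F$ the Crandall--Rabinowitz theorem furnishes an \emph{analytic} local branch $\mathfrak{K}^+$ issuing from $(\lambda_0,0)$. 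The Buffoni--Toland theorem therefore produces a continuous extension $\mathfrak{R}^+=\{(\bar{\lambda}(t),\bar{u}(t)):t\in[0,\infty)\}\subset\mathfrak{S}$ of $\mathfrak{K}^+$ admitting local analytic and injective reparametrizations, satisfying the alternative that either (a) $\mathfrak{R}^+$ leaves every bounded closed subset of $\R\times\Sigma_1$, or (b) $\mathfrak{R}^+$ is a closed loop in the sense that its parametrization is injective on some $[0,T)$ and $(\bar{\lambda}(T),\bar{u}(T))=(\lambda_0,0)$. By Lemma~\ref{compact}, case (a) is exactly the disjunction of the alternatives (i) and (ii) in the statement.

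Next I would show $\mathfrak{R}^+\setminus\{(\lambda_0,0)\}\subset (0,\infty)\times\dot{\E}_1^+$ by copying the positivity argument from the proof of Theorem~\ref{T_continua}. On each connected component of the open set $\{t\in[0,\infty):(\bar\lambda(t),\bar u(t))\neq(\lambda_0,0)\}$, the subset where the curve lies in $(0,\infty)\times\dot{\E}_1^+$ is open; on the component containing the initial interval $(0,\varepsilon)$ it is also nonempty by Theorem~\ref{locbif}. Any boundary point inside such a component would supply a limit $(\lambda_*,u_*)\in\mathfrak{S}$ with $\lambda_*=0$ or $u_*\equiv 0$; by \eqref{sol1} the first case already reduces to the second, and the rescaling $v_j:=\bar u(t_j)/\|\bar u(t_j)\|_{\E_1}$ together with \eqref{EV}, \eqref{comp}, \eqref{A1}, and \eqref{Q} -- exactly as in Theorem~\ref{T_continua} -- forces $(\lambda_*,u_*)=(\lambda_0,0)$, contradicting membership in the component. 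Applied to every component, this yields positivity throughout $\mathfrak{R}^+\setminus\{(\lambda_0,0)\}$.

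It remains to exclude alternative (b). Local injectivity at $t=T$ produces a sequence $t_j\uparrow T$ with $(\bar\lambda(t_j),\bar u(t_j))\neq(\lambda_0,0)$ and $(\bar\lambda(t_j),\bar u(t_j))\to(\lambda_0,0)$, and by the positivity just established these points lie in $(0,\infty)\times\dot{\E}_1^+$. Theorem~\ref{locbif} asserts that every nontrivial zero of $F$ near $(\lambda_0,0)$ lies on $\mathfrak{K}^-\cup\mathfrak{K}^+$ and that $\mathfrak{K}^-\cap (0,\infty)\times\dot{\E}_1^+=\emptyset$, so $(\bar\lambda(t_j),\bar u(t_j))\in\mathfrak{K}^+$ for $j$ large. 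Consequently $\mathfrak{R}^+$ approaches $(\lambda_0,0)$ both as $t\downarrow 0$ and as $t\uparrow T$ along the same local analytic arc $\mathfrak{K}^+$, so a small sub-arc of $\mathfrak{K}^+$ would be traced twice by the parametrization, contradicting its injectivity on $[0,T)$. Hence (b) is impossible, only (i) or (ii) may occur, and the final clause follows because $\Sigma=\F$ implies $\partial\Sigma_1=\emptyset$. The main obstacle is exactly this loop-exclusion: one has to combine the cone positivity with the Crandall--Rabinowitz local uniqueness to force two hypothetical arms of a loop at $(\lambda_0,0)$ to coincide and thereby violate the Buffoni--Toland injectivity; neither ingredient alone would suffice.
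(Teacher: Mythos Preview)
Your overall strategy---invoke the Buffoni--Toland global analytic bifurcation theorem, verify positivity along the branch by the rescaling argument of Theorem~\ref{T_continua}, and then rule out the closed-loop alternative---is exactly the paper's strategy. The gap is in the last step.

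You describe alternative (b) as ``its parametrization is injective on some $[0,T)$'' and then derive a contradiction by saying that a sub-arc of $\mathfrak{K}^+$ is traced twice. But \cite[Thm.~9.1.1]{BuffoniToland} does \emph{not} assert that the global parametrization $t\mapsto(\bar\lambda(t),\bar u(t))$ is injective on $[0,\tau)$ in the closed-loop case; it only asserts the existence of local analytic injective \emph{re}parametrizations near each point, together with the minimality of the period $\tau$. The global curve may well revisit the same point at different parameter values---this can happen at singular times, i.e.\ at $t$ with $\ker\big(F_u(\bar\lambda(t),\bar u(t))\big)\neq\{0\}$. So ``traced twice contradicts injectivity on $[0,T)$'' is not a valid contradiction as stated. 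What \cite{BuffoniToland} \emph{does} provide, and what the paper uses, is the finer statement: if $(\bar\lambda(t_1),\bar u(t_1))=(\bar\lambda(t_2),\bar u(t_2))$ with $t_1\neq t_2$ and $\ker\big(F_u(\bar\lambda(t_1),\bar u(t_1))\big)=\{0\}$, then alternative (iii) holds and $|t_1-t_2|$ is an integer multiple of $\tau$; moreover the set of singular times has no accumulation point. The paper exploits this by choosing sequences $r_k\searrow 0$ and $s_k\searrow 0$ with $(\bar\lambda(r_k),\bar u(r_k))=(\bar\lambda(T_*-s_k),\bar u(T_*-s_k))$ lying at \emph{regular} points of $\mathfrak{K}^+$ (possible because singular times do not accumulate), so that $\tau$ would have to divide $T_*-s_k-r_k$ for every $k$, which is impossible since $T_*-s_k-r_k\to T_*$ strictly while each value is $<T_*\le\tau$.

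A related, smaller issue: your positivity argument is only complete on the connected component of $\{t:(\bar\lambda(t),\bar u(t))\neq(\lambda_0,0)\}$ that contains $(0,\varepsilon)$, since that is the only component for which you have a point known to lie in $(0,\infty)\times\dot\E_1^+$. The sentence ``applied to every component'' is not justified for later components, because near a subsequent passage through $(\lambda_0,0)$ the curve could a priori enter $\mathfrak{K}^-$ rather than $\mathfrak{K}^+$. The paper avoids this by working with $T_*:=\sup\{T>0:(\bar\lambda(t),\bar u(t))\in(0,\infty)\times\dot\E_1^+\ \text{for}\ 0<t<T\}$ and showing directly that $T_*=\infty$; this simultaneously yields positivity on all of $\mathfrak{R}^+\setminus\{(\lambda_0,0)\}$ and excludes the closed loop.
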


\begin{proof}
Due to Corollary~\ref{fred}, Corollary~\ref{CR}, and Lemma~\ref{compact} we may apply \cite[Thm.9.1.1]{BuffoniToland}. Consequently, there is a  continuous curve $\mathfrak{R}^+=\{(\bar{\lambda}(t),\bar{u}(t))\,;\, t\in [0,\infty)\}\subset \mathfrak{S}$ extending $\mathfrak{K}^+$ and having a local analytic and injective reparametrization. For this curve $\mathfrak{R}^+$,  one of the alternatives
\begin{itemize}
\item[(i)] $\|(\bar{\lambda}(t),\bar{u}(t))\|_{\R\times\E_1}\rightarrow \infty$ as $t\rightarrow$, or
\item[(ii)] $\bar{u}(t)\rightarrow \partial\Sigma_1$ as $t\rightarrow\infty$, or
\item[(iii)] $\mathfrak{R}^+$ is a closed loop, i.e. there is a minimal $\tau>0$  such that \mbox{$\mathfrak{R}^+=\{(\bar{\lambda}(t),\bar{u}(t))\,;\, 0\le t\le\tau\}$} and $(\bar{\lambda}(\tau),\bar{u}(\tau))=(\bar{\lambda}(0),\bar{u}(0))=(\lambda_0,0)$,
\end{itemize}
occurs. Moreover, if $(\bar{\lambda}(t_1),\bar{u}(t_1))=(\bar{\lambda}(t_2),\bar{u}(t_2))$ for some $t_1\not= t_2$ with $$\kk (F_u(\bar{\lambda}(t_1),\bar{u}(t_1)))=\{0\}\ ,$$ then (iii) occurs and $\vert t_1-t_2\vert$ is an integer multiple of $\tau$. Finally, the set $$\{t\ge 0\,;\, \kk(F_u(\bar{\lambda}(t ),\bar{u}(t )))\not=\{0\}\}$$ has no accumulation points. So, the assertion   follows provided we can prove  that the curve \mbox{$\mathfrak{R}^+\setminus \{(\lambda_0,0)\}$} lies in $(0,\infty)\times \dot{\E}_1^+$ and that alternative (iii) does not occur. For this we use an argument similar to \cite[Thm.9.2.2]{BuffoniToland}: Set 
$$
T_*:=\sup\,\{T>0\,;\, (\bar{\lambda}(t),\bar{u}(t))\in (0,\infty)\times \dot{\E}_1^+ \ \text{for}\ 0<t<T\}
$$ 
and note that $T_*\ge \ve$ according to Theorem~\ref{locbif}.  Assuming $T_*<\infty$, there is a sequence $(t_j)$ with $t_j\nearrow T_*$ such that $(\lambda_j,u_j):=(\bar{\lambda}(t_j),\bar{u}(t_j))\in (0,\infty)\times \dot{\E}_1^+$ converges to $(\lambda,u):=(\bar{\lambda}(T_*),\bar{u}(T_*))$ in $\R\times\E_1$ and $(\lambda,u)\not\in (0,\infty)\times \dot{\E}_1^+$. But then, the same argument as in the proof of Theorem~\ref{T_continua} yields $(\bar{\lambda}(T_*),\bar{u}(T_*))=(\lambda_0,0)$. Since Theorem~\ref{locbif} implies that the bifurcation curve which lies in $\R^+\times\E_1^+$ and passes through $(\lambda_0,0)$ is near this point uniquely determined by $\mathfrak{K}^+$, we derive that $(\bar{\lambda}(t),\bar{u}(t))$ belongs to $\mathfrak{K}^+$ for~$t$ less, but close to $T_*$. Consequently, there are sequences $r_k\searrow 0$ and $s_k\searrow 0$ such that $(\bar{\lambda}(r_k),\bar{u}(r_k))=(\bar{\lambda}(T_*-s_k), \bar{u}(T_*-s_k))$ and $\kk\big(F_u(\bar{\lambda}(r_k),\bar{u}(r_k))\big)=\{0\}$. But then, as stated above, the minimally chosen $\tau>0$ with $(\lambda_0,0)=(\bar{\lambda}(\tau),\bar{u}(\tau))$ divides $T_*-s_k-r_k$ for each $k\in\N$ which is obviously impossible. Therefore, $T_*=\infty$ and alternative (iii) above does not occur.
\end{proof}

Note that it is not claimed in Theorem~\ref{T-analytic} that  $\mathfrak{R}^+$ is a maximal connected subset of  $\mathfrak{S}$. Other curves or manifolds in  $\mathfrak{S}$ may intersect  $\mathfrak{R}^+$. We also point out that alternative (i) in Theorem~\ref{T-analytic} is stronger than saying that $\mathfrak{R}^+$ is unbounded in $\R\times\E_1$ (see Theorem~\ref{T_continua}). 

\section{Example}\label{Sec5}

Let us consider an example.
Let $u=u(a,x)$ denote the distribution density of individuals of a population with age $a\in J:= [0,a_m)$ 
at spatial position $x$ in a bounded, sufficiently smooth space region $\Om\subset\R^n$, where $a_m\in (0,\infty)$ denotes the maximal age.
Suppose that the individuals move within $\Om$ and that dispersal speed $d>0$ depends
on the local overall population; that is, suppose that movement is governed by a density-dependent diffusion 
term $-\mathrm{div}_x\big(d(U)\nabla_xu\big)$, where 
\bqn\label{D}
U(x):=\int_0^{a_m}u(a,x)\rd a
\eqn 
is the overall 
population at spatial position $x\in\Om$. Assume further that individuals cannot leave the space region $\Om$ so 
that the behavior on the boundary $\partial\Om$ is described by a Neumann condition $\partial_\nu u=0$, with $\nu$ 
denoting the outward unit normal to $\partial\Om$. For a given  distribution~$u$ let $\mu(U,\cdot)$ and $\lambda b(U,\cdot)$ denote the death rate respectively the parameter-dependent birth rate. Then equilibrium
(i.e.,  time-independent) solutions to the corresponding evolution problem satisfy the following equations:
\begin{align}
&\partial_a u-\mathrm{div}_x\big(d(U(x))\nabla_xu\big)+\mu(U(x),a)u=0\,, && a\in (0,a_m)\,, & x\in\Om\,, 
\label{Aa}\\
&u(0,x)=\lambda\int_0^{a_m} b(U(x),a)\, u(a,x)\,\rd a\,, &&& x\in\Om\,, 
\label{B}\\
&\partial_\nu u(a,x)= 0\,, && a\in (0,a_m)\,, & x\in\partial\Om\, .
\label{C}
\end{align}
Fix $p\in (n+2,\infty)$ and set 
$$E_1:=\Wpb^2(\Om):=\{v\in W_p^2(\Om); \partial_\nu v=0\}\dhr E_0:=L_p(\Om)
$$
and
$$
\E_1:=L_p(J,\Wqb^2(\Om))\cap W_p^1(J,L_p(\Om))\ ,\qquad \E_0:=L_p(J,L_p(\Om))\ .
$$%
 Clearly, \eqref{densecompact} and \eqref{reflexive} hold.
Observe then that the interpolation result  \cite[Thm.~\!4.3.3]{Triebel} and Sobolev's embedding theorem imply
\bqn
\label{alpha}
E_1=\Wqb^2(\Om)\hookrightarrow E_\varsigma:=\big(L_p(\Om),\Wqb^2\big)_{1-1/p,p}\doteq \Wqb^{2(1-1/p)} \hookrightarrow C^1(\bar{\Om})\ . 
\eqn
Thus $\mathrm{int}(E_\varsigma^+)\not=\emptyset$ while Remark~\ref{R1} implies that $$\E_1\dhr \F:=W_p^s(J,E_\vartheta)$$ for some $\vartheta>1-1/p=\varsigma$ and some $s\in (0,1/p)$, where $E_\vartheta\doteq \Wpb^{2\vartheta}(\Om)$.
Let \mbox{$d\in C^4(\R)$} satisfy $d(z)\ge\underline{d}>0$ for $z\in \R$ and let $\mu, b\in C^4\big(\R\times [0,a_m],\R^+\big)$ with $b(0,\cdot)\not= 0$. 
For \mbox{$u\in \F\hookrightarrow L_1(J,E_\vartheta)$}, set $$U:=\int_0^{a_m}u(a,\cdot)\rd a\in E_\vartheta$$ and define
$$
\A(u,a)w:=-\mathrm{div}_x\big(d(U)\nabla_xw\big) +\mu(U,a) w\ , \quad w\in E_1\ ,\quad u\in\F\ ,\quad a\in J\ .
$$
Then, by \cite[Prop.4.1]{WalkerAMPA}, 
$$
\A\in C^1\big(\F,L_\infty(J,\ml(\Wqb^2(\Om),L_p(\Om)))\big)
$$
and thus in particular $\A\in C^1(\F,\ml(\E_1,\E_0))$. Since  \cite[Prop.4.1]{WalkerAMPA} together with the multiplication result of \cite[Thm.4.1]{AmannMultiplication} ensure $\ell \in C^1\big(\F, \ml(\E_1,E_\vartheta)\big)$ as well with
\bqnn
\lambda\ell(v)u:=\int_0^{a_m} \lambda b(v,a) u(a)\rd a\ ,\quad v\in\F\ ,\quad u\in\E_1\ ,
\eqnn

condition \eqref{A1} holds.
Note that for $\alpha\in [0,1]$, $u\in\F$ and $w\in E_1$ we have
 \bqnn
\begin{split}
\A_\alpha(u,\cdot)w:&=(1-\alpha)\A(0,\cdot)w+\alpha\A(u,\cdot)w\\
&=-\mathrm{div}_x\big([(1-\alpha)d(0)+\alpha d(U)]\nabla_xw\big) +[(1-\alpha)\mu(0,\cdot)+\alpha\mu(U,\cdot)] w
\end{split}
\eqnn
with $(1-\alpha)d(0)+\alpha d(U)\ge \underline{d}$. Hence, for $\alpha\in [0,1]$, $u\in\F$, and $a\in J$  the operator $-\A_\alpha(u,a)$ is resolvent positive, generates a contraction semigroup on each $L_q(\Om)$, $1<q<\infty$ (see \cite{AmannIsrael}), and is self-adjoint in $L_2(\Om)$. Hence \cite[III.Ex.4.7.3,III.Thm.4.10.10]{LQPP} entail \eqref{A2convex}. Since for $u\in\F$ fixed, the mapping $\A(u,\cdot):[0,a_m]\rightarrow \ml(\Wqb^2(\Om),L_p(\Om))$ is H\"older continuous, there is a unique positive evolution operator $\Pi_u(a,\sigma)$, $0\le \sigma\le a\le a_m$ on $E_0$ corresponding to $\A(u,\cdot)$, see \cite[II.Cor.4.4.2.,II.Thm.6.4.2]{LQPP}. In particular, $T[u](0,\cdot)=\Pi_u(\cdot,0)\in\ml_+(E_\varsigma,\E_1)$ for $u\in\F$ and
$\ell(0)\in\ml_+(\E_1,E_\varsigma)$,
that is, \eqref{positivity} and \eqref{l0} hold. Also note that the maximum principle ensures that $\Pi_0(a,0)\in\mk(E_\varsigma)$ is strongly positive for each $a\in J\setminus\{0\}$. As $b(0,\cdot)\not= 0$ we conclude (see \cite[Sect.3]{WalkerSIMA}) that 
$$
Q(0)=\int_0^{a_m}b(0,a)\Pi_0(a,0) \,\rd a\in\mk(E_\varsigma)
$$
is strongly positive, whence \eqref{Q0}. Consequently, we are in a position to apply Theorem~\ref{T_continua} and deduce that there is an unbounded continuum of positive solutions $(\lambda,u)$ in $(0,\infty)\times\E_1^+$ to~\mbox{\eqref{Aa}-\eqref{C}}.

Let us point out that this is just one simple example which can be extended in various ways. For instance, one may consider more general (uniformly elliptic) differential operators that depend also locally on age $a$. Also the regularity assumptions  are not chosen optimally and the phase space $E_0$ can be any $L_q(\Om)$ provided $q\in (1,\infty)$. In Theorem~\ref{T-analytic} one may also take $E_0=L_1(\Om)$, 
where one may check  the analyticity condition \eqref{A1a} with the help of \cite[5.Thm.4]{RunstSickel}. We refrain from giving details and refer to \cite{WalkerSIMA,WalkerJDE} for other examples.
For more concrete applications of global bifurcation results we refer e.g. to~\cite{WalkerCrelle} and to forthcoming research.


\end{document}